\documentclass[11pt, reqno]{amsart} 
\usepackage{amssymb, latexsym, amsmath}
\newtheorem{theorem}{Theorem}[section]
\newtheorem{lemma}[theorem]{Lemma}

\newtheorem{corollary}[theorem]{Corollary}
\theoremstyle{definition}
\newtheorem{remark}[theorem]{Remark} 
\theoremstyle{definition}
\newtheorem{definition}[theorem]{Definition} 
\theoremstyle{definition}

\theoremstyle{definition}

\numberwithin{equation}{section}

\usepackage[mathscr]{eucal}


\usepackage{xy} 
\xyoption{all}

\newcommand{\mf}[1]{\mathfrak{#1}}

\newcommand{\del}{\partial}
\newcommand{\Oh}{\mathcal{O}}

\DeclareMathOperator{\Spec}{Spec}

\DeclareMathOperator{\Ass}{Ass}
\DeclareMathOperator{\Ann}{Ann}
\DeclareMathOperator{\nil}{nil}
\DeclareMathOperator{\Der}{Der}

\CompileMatrices

\setlength{\topmargin}{.25in}
\setlength{\oddsidemargin}{.25in}
\setlength{\evensidemargin}{.25in}
\setlength{\textwidth}{5.5in}
\setlength{\marginparwidth}{1.0in}

\begin{document}
\title[Principal Gradient Schemes]{Principal gradient schemes have
regular reduced closed subschemes}
\date{\today}

\author[J. Mullet]{Joshua P. Mullet}
\address{Department of Mathematics\\
        The Ohio State University\\
        Columbus, OH 43210}
\email{mullet@math.ohio-state.edu}
\maketitle

\section{Introduction}\label{S:intro}

This paper represents the first steps in a program whose goal is to
understand the formal properties of \emph{gradient schemes}, i.e.,
schemes that are locally analytically cut out by the gradient of a
function (see Definition~\ref{D:gradscheme}).  In \cite{hC05}, Clemens
has shown that Hilbert Schemes of curves on $K$-trivial threefolds are
gradient schemes.  Therefore it is the hope that an understanding of
gradient schemes will shed light on the geometry of these Hilbert
schemes.  The problem of understanding gradient schemes is also
interesting from the point of view of commutative algebra in that we are
trying to determine which ideals in power series rings are gradient
ideals.  

The contents of the paper are as follows.  In Section~\ref{S:gradjac} we
review some basic algebraic facts and definitions about Jacobian ideals
and gradient ideals in power series rings.  In Section~\ref{S:regcrit}
we prove a regularity criterion (Theorem~\ref{T:reduced}) for the
reduced quotient ring of a power series ring modulo a principal ideal.
This criterion is stated in terms of the associated primes of the
Jacobian ideal and is interesting in its own right.  We are currently
working on generalizations.  Finally, in Section~\ref{S:gradapp} we
prove our main result (Theorem~\ref{T:princgrad}), which states that
principal gradient schemes have regular reduced subschemes.  We consider
complete intersection gradient schemes in \cite{jMUP}.

\subsection{Acknowledgments}\label{Ss:thanks}

I am indebted to Herb Clemens for his guidance and for
suggesting this project.  I would also like to thank Linda Chen
and Sheldon Katz for valuable conversations.  Finally, I would to thank
Yu-Han Liu who has shown me an elementary proof of
Theorem~\ref{T:princgrad} that does not use Theorem~\ref{T:reduced}.

\section{Basic facts regarding gradient and Jacobian
ideals}\label{S:gradjac}

Let $k$ be a field of characteristic zero and consider the ring of
formal power series $k[[x_1, \ldots, x_n]]$.  We gather some basic
facts and definitions regarding Jacobian and gradient ideals in $k[[x_1,
\ldots, x_n]]$.

\begin{definition}\label{D:gradjacdef}
Let $f \in k[[x_1, \ldots, x_n]]$ be a power series.  The \emph{gradient
ideal} of $f$ is the ideal
\[
    I_{\nabla f} := \left (\dfrac{\del f}{\del x_1}, \ldots, \dfrac{\del
    f}{\del x_n} \right) \subseteq k[[x_1, \ldots, x_n]],
\]
and the \emph{Jacobian ideal} of $f$ is the ideal
\[
    (f, I_{\nabla f}) \subset k[[x_1, \ldots, x_n]].
\]
\end{definition}

\begin{remark}\label{R:partials}
We may also write
\[
    \dfrac{\del f}{\del x_i}
\]
as $f_{x_i}$.
\end{remark}

\begin{definition}\label{D:order}
Let $f \in k[[x_1, \ldots, x_n]]$ be a nonzero power series.  The
\emph{order} of $f$ is the minimal total degree of all nonzero monomials
appearing in $f$.
\end{definition}

\begin{lemma}\label{L:isolated}
If $f \in k[[x_1, \ldots, x_n]]$ is a squarefree power series, then the
ring 
\[
    \dfrac{k[[x_1, \ldots, x_n]]}{(f, I_{\nabla f})}
\]
has dimension less than $n-1$.
\end{lemma}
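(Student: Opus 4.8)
The plan is to recast the dimension bound as a statement about the heights of the minimal primes of the Jacobian ideal $J := (f, I_{\nabla f})$ and then eliminate the only dangerous case using unique factorization in $R := k[[x_1,\ldots,x_n]]$. Since $R$ is a regular local ring it is a catenary domain, so $\dim R/\mathfrak p = n - \operatorname{ht}\mathfrak p$ for every prime $\mathfrak p$, whence $\dim R/J = n - \min\{\operatorname{ht}\mathfrak p : \mathfrak p \text{ minimal over } J\}$. It therefore suffices to prove that every prime minimal over $J$ has height at least $2$. Discarding the trivial case in which $f$ is a unit (then $J = R$ and the quotient vanishes), squarefreeness guarantees $f \ne 0$, so $J \ne (0)$ and height $0$ is impossible; the entire content is to rule out height-one minimal primes.

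So I would assume, for contradiction, that some prime $\mathfrak p$ minimal over $J$ has height $1$. Because $R$ is a UFD, every height-one prime is principal, say $\mathfrak p = (g)$ with $g$ irreducible. From $f \in J \subseteq \mathfrak p$ we get $g \mid f$, and squarefreeness lets me factor $f = g h$ with $g \nmid h$. Differentiating gives $f_{x_i} = g_{x_i} h + g\, h_{x_i}$; since $f_{x_i} \in J \subseteq (g)$ this yields $g \mid g_{x_i} h$, and as $g$ is prime with $g \nmid h$ I conclude $g \mid g_{x_i}$ for every $i = 1, \ldots, n$.

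The crux is then to see that no nonzero nonunit can divide all of its own partials when $\operatorname{char} k = 0$. Writing $g = g_d + g_{d+1} + \cdots$ as a sum of homogeneous parts with $g_d \ne 0$ and $d = \operatorname{ord}(g) \ge 1$, the degree-$(d-1)$ homogeneous component of $g_{x_i}$ is exactly $(g_d)_{x_i}$. But any nonzero multiple of $g$ has order at least $d$, so the relation $g \mid g_{x_i}$ forces the degree-$(d-1)$ part of $g_{x_i}$ to vanish, i.e. $(g_d)_{x_i} = 0$ for all $i$. A homogeneous polynomial of positive degree all of whose first partials vanish is zero in characteristic zero (for instance by Euler's identity $d\, g_d = \sum_i x_i (g_d)_{x_i}$), so $g_d = 0$, contradicting $g_d \ne 0$. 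Hence no height-one prime is minimal over $J$, and $\dim R/J \le n-2 < n-1$.

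The step I expect to require the most care is this last one: the naive hope that $\operatorname{ord}(g_{x_i})$ is always strictly smaller than $\operatorname{ord}(g)$ is false (for $g = x_1^2 + x_2^3$ the partial $g_{x_2} = 3x_2^2$ has the same order as $g$), so the argument must be run on the lowest homogeneous piece $g_d$ rather than on $g$ itself, and it is here that $\operatorname{char} k = 0$ is indispensable — in characteristic $p$ the series $x_1^p$ is a nonunit dividing all of its partials and the statement genuinely fails. Conceptually the lemma merely records that a reduced hypersurface over a perfect field is generically smooth, so one could instead invoke generic smoothness for reduced excellent local rings; I prefer the self-contained unique-factorization-and-order argument because it exhibits the roles of squarefreeness and of characteristic zero explicitly.
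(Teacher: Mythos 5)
Your proposal is correct and follows essentially the same route as the paper's proof: a height-one (hence principal) prime containing $(f, I_{\nabla f})$ produces an irreducible factor $g$ of $f$ dividing every $f_{x_i}$, squarefreeness of $f$ (your factorization $f = gh$ with $g \nmid h$ is the paper's $f = h^k g$ with $k=1$) forces $g \mid g_{x_i}$ for all $i$, and characteristic zero rules this out. The only difference is one of care rather than strategy: where the paper simply asserts that some partial of $g$ has order strictly less than $\operatorname{ord}(g)$ (an existential claim that is in fact true), you justify it properly via the lowest homogeneous part and Euler's identity, and you spell out the dimension-to-height translation and the UFD step that the paper leaves implicit.
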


\begin{proof}
If 
\[
    \dim \dfrac{k[[x_1, \ldots, x_n]]}{(f, I_{\nabla f})} = n - 1,
\]
then there exists an irreducible factor $h$ of $f$ such that for all
$1 \leq i \leq n$ the power series $h$ divides the partial derivative of
$f$ with respect to $x_i$.  Write $f = h^kg$ where $h$ does not divide
$g$. Then for all $1 \leq i \leq n$ the factor $h$ divides 
\[
    \dfrac{\del f}{\del x_i} = h^k \dfrac{\del g}{\del x_i} + k h^{k-1}g
    \dfrac{\del h}{\del x_i}.
\]
It follows that $h$ divides all its partial derivatives, but this is
impossible since there must exist an $1 \leq i \leq n$ such that the
order of the series
\[
    \dfrac{\del h}{\del x_i}
\]
is less than the order of $h$.  
\end{proof}

\begin{lemma}\label{L:divpartials}
Let $h$ be an irreducible factor of a power series $f \in k[[x_1,
\ldots, x_n]]$.  If for all $1 \leq i \leq n$ we have 
\[
    h^k \textup{ divides } \dfrac{\del f}{\del x_i},
\]
then $h^{k+1}$ divides $f$.
\end{lemma}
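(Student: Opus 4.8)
The plan is to track the exact power of $h$ dividing $f$ and show it must exceed $k$. I would write $f = h^m g$ with $h \nmid g$; since $h$ is a factor of $f$ we have $m \geq 1$, and the goal is precisely to prove $m \geq k+1$. I would argue by contradiction, assuming $m \leq k$ and deriving that $h$ divides all of its own partial derivatives, which is exactly the impossible situation exploited in the proof of Lemma~\ref{L:isolated}.

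The computation that drives this is the product rule, which gives
\[
    \frac{\del f}{\del x_i} = h^{m-1}\left( m\, g\, \frac{\del h}{\del x_i} + h\, \frac{\del g}{\del x_i}\right)
\]
for each $1 \leq i \leq n$. Since $k[[x_1,\ldots,x_n]]$ is a unique factorization domain and $h^k$ divides the left-hand side with $m-1 \leq k-1 < k$, I can cancel the factor $h^{m-1}$ and conclude that $h^{k-m+1}$ divides the bracketed term; as $k-m+1 \geq 1$, in particular $h$ itself divides $m\, g\, \frac{\del h}{\del x_i} + h\, \frac{\del g}{\del x_i}$. Because $h$ clearly divides the second summand, it must divide $m\, g\, \frac{\del h}{\del x_i}$. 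Here I would invoke characteristic zero: the positive integer $m$ is a unit, so $h$ divides $g\, \frac{\del h}{\del x_i}$, and since $h$ is irreducible (hence prime) and does not divide $g$, primality forces $h \mid \frac{\del h}{\del x_i}$.

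Since this holds for every $i$, the final step is to contradict it exactly as in Lemma~\ref{L:isolated}: a non-unit power series cannot divide all its partials, because some $\frac{\del h}{\del x_{i_0}}$ is nonzero of order strictly less than the order of $h$, whence $h \nmid \frac{\del h}{\del x_{i_0}}$. This contradiction yields $m \geq k+1$, i.e.\ $h^{k+1}$ divides $f$. I do not expect a genuine obstacle here; the only point requiring care is the bookkeeping $k-m+1 \geq 1$ that justifies cancelling $h^{m-1}$ and extracting a single factor of $h$, which is exactly where the assumption $m \leq k$ is consumed. Everything downstream reuses the same primality-plus-order mechanism already established.
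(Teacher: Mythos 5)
Your proof is correct and is essentially the paper's own argument: write $f = h^m g$ with $h \nmid g$, apply the product rule, cancel $h^{m-1}$ under the assumption $m \leq k$ to force $h$ to divide the bracketed term, and then use primality of $h$ with $h \nmid g$ to conclude $h$ divides all its partials, contradicting the order argument from Lemma~\ref{L:isolated}. Your explicit use of characteristic zero (so that the integer $m$ is a unit) makes precise a step the paper leaves implicit, but the route is the same.
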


\begin{proof}
Write $f = h^l g$, where $h$ does not divide $g$.  We wish to show that
$l \geq k+1$.  For $1 \leq i \leq n$ we compute
\begin{equation}\label{Eq:divpartials}
    \dfrac{\del f}{\del x_i} = h^{l-1}\left(h\dfrac{\del g}{\del x_i} +
    l\dfrac{\del h}{\del x_i} g\right).
\end{equation}
If $l < k+1$ then since $h^k$ divides the right hand side in
\eqref{Eq:divpartials}, it must be the case that $h$ divides 
\[
    \left(h\dfrac{\del g}{\del x_i} + l\dfrac{\del h}{\del x_i}
    g\right).
\]
Since $h$ does not divide $g$, it follows that $h$ must divide all of
its partials.  This contradicts Lemma~\ref{L:isolated}.
\end{proof}

\begin{lemma}\label{L:radgrad}
Let $f \in k[[x_1, \ldots, x_n]]$ be a power series that is not a unit.
Then 
\[
    f \in \sqrt{I_{\nabla f}}.
\]
\end{lemma}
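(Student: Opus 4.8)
The plan is to prove the radical membership one prime at a time. Since $\sqrt{I_{\nabla f}}$ is the intersection of all primes containing $I_{\nabla f}$, it suffices to show $f \in \mathfrak{p}$ for every prime $\mathfrak{p} \supseteq I_{\nabla f}$ (if some $\partial f/\partial x_i$ is a unit then $I_{\nabla f}$ is the unit ideal and there is nothing to prove). So I would fix such a $\mathfrak{p}$ and argue by contradiction, assuming $f \notin \mathfrak{p}$. Set $A = k[[x_1,\ldots,x_n]]/\mathfrak{p}$, a complete local domain over $k$, and write $\bar f$ for the image of $f$. By hypothesis $\bar f \neq 0$, while the image of every partial $\partial f/\partial x_i$ is zero in $A$. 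Because $f$ is a non-unit, $\bar f$ lies in $\mathfrak{m}_A\setminus\{0\}$; in particular $\dim A \geq 1$, since otherwise $\mathfrak{p} = \mathfrak{m}$ and then $f \in \mathfrak{m} = \mathfrak{p}$ already.

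The key step is to produce a formal arc along $V(\mathfrak{p})$ that avoids $V(f)$. Concretely, I want a local, injective $k$-algebra homomorphism $A \hookrightarrow \kappa[[t]]$ for some field extension $\kappa \supseteq k$. Since $A$ is a Noetherian local domain of dimension at least one, it is dominated by a discrete valuation ring $\mathcal{O}$ with the same fraction field — for instance the local ring at the generic point of a component of the exceptional divisor of the blow-up of $\mathfrak{m}_A$. Completing $\mathcal{O}$ and invoking the Cohen structure theorem gives $\mathcal{O} \hookrightarrow \kappa[[t]]$, and composing with $A \hookrightarrow \mathcal{O}$ yields the desired local injection; domination guarantees that it is a local homomorphism, and injectivity guarantees that $\bar f$ maps to a nonzero element. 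Precomposing with the quotient map produces a continuous $k$-algebra homomorphism $\Phi\colon k[[x_1,\ldots,x_n]] \to \kappa[[t]]$ that kills $\mathfrak{p}$ yet satisfies $\Phi(f) \neq 0$.

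With $\Phi$ in hand the contradiction is immediate from the chain rule. Writing $u_i(t) = \Phi(x_i) \in t\,\kappa[[t]]$, formal differentiation of the substitution $\Phi(f) = f(u_1(t),\ldots,u_n(t))$ gives
\[
    \frac{d}{dt}\,\Phi(f) = \sum_{i=1}^{n} \Phi\!\left( \frac{\partial f}{\partial x_i} \right) u_i'(t),
\]
and each term vanishes because $\partial f/\partial x_i \in \mathfrak{p}$ while $\Phi$ factors through $A$. Hence $\Phi(f)$ has zero derivative, and since $\operatorname{char} k = 0$ this forces $\Phi(f) \in \kappa$ to be a constant; but $f \in \mathfrak{m}$ gives $\Phi(f) \in t\,\kappa[[t]]$, so $\Phi(f) = 0$, contradicting $\Phi(f) \neq 0$. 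Therefore $f \in \mathfrak{p}$, and as $\mathfrak{p}$ was an arbitrary prime containing $I_{\nabla f}$ we conclude $f \in \sqrt{I_{\nabla f}}$.

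The main obstacle is the construction of the arc, that is, the assertion that a complete local domain of dimension at least one over a field of characteristic zero is dominated by a discrete valuation ring whose completion is a formal power series ring in one variable, so as to obtain a local embedding into $\kappa[[t]]$ that does not annihilate a prescribed nonzero element. Once that is granted, the chain-rule computation is elementary and is exactly where characteristic zero is essential. I note that the same idea can be packaged without arcs: the vanishing of all $\Phi(\partial f/\partial x_i)$ is precisely the statement that $d\bar f = 0$ in the universally finite differential module $\hat\Omega_{A/k}$, and one can argue directly that a non-unit of $A$ with vanishing differential must be zero. I would nevertheless prefer the arc formulation, since the needed input — domination by a discrete valuation — is more transparent than the comparison between $\hat\Omega_{A/k}$ and the Kähler differentials of the fraction field.
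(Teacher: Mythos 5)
Your proof is correct, and it takes a genuinely different route from the paper, whose entire argument is a one-line citation: the paper invokes the stronger known theorem that $f$ lies in the integral closure of $(x_1, \ldots, x_n)\, I_{\nabla f}$ (referencing the literature on integral closures of ideals), from which $f \in \sqrt{I_{\nabla f}}$ is immediate. You instead prove radical membership directly, prime by prime, via a formal arc: dominate $A = k[[x_1,\ldots,x_n]]/\mathfrak{p}$ by a DVR with the same fraction field, complete and apply Cohen's structure theorem to get a local injection into $\kappa[[t]]$, and then run the chain rule in characteristic zero to force $\Phi(f) = 0$. Your argument checks out: the composite is local, hence continuous, so $\Phi(f) = f(u_1(t),\ldots,u_n(t))$ is legitimate substitution with $u_i \in t\,\kappa[[t]]$; vanishing derivative in characteristic zero does force a constant; and injectivity gives the contradiction. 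It is worth noting that your computation is essentially the standard proof of the very statement the paper cites, run through the valuative criterion of integral closure: the identity $\frac{d}{dt}\Phi(f) = \sum_i \Phi(f_{x_i})\, u_i'(t)$ gives $\operatorname{ord}_t \Phi(f) \geq 1 + \min_i \operatorname{ord}_t \Phi(f_{x_i})$ along every arc, which is exactly $f \in \overline{\mathfrak{m}\, I_{\nabla f}}$ tested valuatively. What each approach buys: the paper's citation is shorter and records the sharper integral-closure statement; yours is self-contained modulo two standard inputs --- the domination lemma (any Noetherian local domain is dominated by a DVR inside its fraction field; for your complete, hence Nagata, ring $A$ the normalized blow-up of $\mathfrak{m}_A$ has finite normalization, or one can quote Krull--Akizuki) and Cohen's structure theorem --- and it makes visible exactly where characteristic zero enters, which the citation hides. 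One small repair to your sketch, which you yourself half-flag: the local ring at the generic point of a component of the exceptional divisor of the blow-up of $\mathfrak{m}_A$ is a one-dimensional Noetherian local domain but need not be regular, so you must normalize the blow-up (or apply Krull--Akizuki to that one-dimensional ring) before you can claim a DVR.
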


\begin{proof}
This follows from the stronger statement that $f$ is in the integral
closure of the ideal $(x_1, \ldots, x_k) I_{\nabla f}$.  See
\cite{cH06}.
\end{proof}

\section{A regularity criterion}\label{S:regcrit}

We recall the Jacobian criterion for power series rings due to Nagata,
see \cite[Proposition~22.7.2]{EGAIVi}.  For any $k$-algebra $A$ we
denote by $\Der_k (A)$ the $A$-module of all $k$-linear derivations from
$A$ to $A$.

\begin{theorem}[Nagata]\label{T:jaccrit}
Let $I$ be an ideal of the ring $k[[x_1, \ldots, x_n]]$, and let $\mf{p}$ be a
prime ideal with $I \subseteq \mf{p}$.  Then the ring
\[
    \dfrac{k[[x_1, \ldots, x_n]]_\mf{p}}{I\cdot k[[x_1, \ldots, x_n]]_\mf{p}}
\]
is regular if and only if there exist elements $g_1, \ldots, g_k \in
I$ and derivations $D_1, \ldots, D_k \in \Der_k(k[[x_1, \ldots, x_n]])$ such that 
\begin{enumerate}
    \item the images of $g_1, \ldots, g_k$ in $I\cdot k[[x_1, \ldots,
    x_n]]_\mf{p}$ generate $I\cdot k[[x_1, \ldots, x_n]]_\mf{p}$, and
    \item $\det \{D_i g_j\} \notin \mf{p}$.
\end{enumerate}

\end{theorem}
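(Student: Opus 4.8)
The plan is to recast both conditions in the language of Kähler differentials and then to invoke the differential criterion of regularity available over a field of characteristic zero. Write $R=k[[x_1,\dots,x_n]]$ and $A=R/I$, and let $\widehat{\Omega}_{R/k}$ denote the module of continuous (universally finite) differentials of $R$ over $k$; it is free of rank $n$ on $dx_1,\dots,dx_n$, and the formal chain rule identifies $\Der_k(R)$ with its $R$-dual $\Hom_R(\widehat{\Omega}_{R/k},R)$, which is free on $\del/\del x_1,\dots,\del/\del x_n$, so that $D_i g_j=\langle D_i,\,dg_j\rangle$. First I would write down the conormal (second fundamental) sequence
\[
    I/I^2 \xrightarrow{\ \delta\ } \widehat{\Omega}_{R/k}\otimes_R A \longrightarrow \widehat{\Omega}_{A/k} \longrightarrow 0,\qquad \delta(\bar g)=dg\otimes 1,
\]
localize it at $\mf{p}$, and observe that in the basis $dx_l$ the map $\delta$ sends $g_j$ to the column $(\del g_j/\del x_l)_l$, so $\delta$ is represented by the Jacobian matrix of the $g_j$. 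Writing $D_i=\sum_l a_{il}\,\del/\del x_l$, the $k\times k$ matrix $\{D_ig_j\}$ is obtained from that Jacobian matrix by left multiplication by the coefficient matrix $(a_{il})$; hence condition~(2) is equivalent to the assertion that the Jacobian of $g_1,\dots,g_k$ has rank at least $k$ at $\mf{p}$.

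The main input I would borrow is the differential criterion of regularity, in the following form valid in the complete-local setting over a field of characteristic zero: the rank of the conormal map $\delta$ at $\mf{p}$ is at most $\operatorname{ht}(I\cdot R_\mf{p})$, with equality if and only if $A_\mf{p}$ is regular. This is exactly the content of the classical Jacobian criterion over a perfect field, and it is the step where characteristic zero genuinely enters, through the equivalence of regularity with formal smoothness over $k$. Granting it, the remainder is bookkeeping.

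With these two observations the equivalence is immediate in both directions. For sufficiency, condition~(1) shows that the $k$ elements $g_j$ generate $I\cdot R_\mf{p}$, whence $\operatorname{ht}(I\cdot R_\mf{p})\le k$, while condition~(2) forces the Jacobian rank at $\mf{p}$ to be at least $k$; combined with the inequality $\rank\le\operatorname{ht}(I\cdot R_\mf{p})$ this gives $\rank=\operatorname{ht}=k$, and the criterion then yields regularity of $A_\mf{p}$. For necessity, regularity of $A_\mf{p}$ makes $I\cdot R_\mf{p}$ a complete intersection, so I can choose $g_1,\dots,g_k\in I$ minimally generating $I\cdot R_\mf{p}$ with $k=\operatorname{ht}(I\cdot R_\mf{p})$, which is condition~(1); by the criterion the Jacobian of the $g_j$ has rank $k$ at $\mf{p}$, so some $k\times k$ minor, say in columns $l_1,\dots,l_k$, lies outside $\mf{p}$, and taking $D_i=\del/\del x_{l_i}$ yields $\det\{D_ig_j\}\notin\mf{p}$, which is condition~(2).

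The step I expect to be the real obstacle is not this linear algebra but the foundational work underneath it. For power series rings the ordinary module of Kähler differentials is not finitely generated, so one must set everything up with the completed/universally finite differential module, prove it is free of rank $n$, and check that the derivations in question are exactly its $R$-dual, so that the pairing $D_ig_j$ and the chain rule $D(g)=\sum_l D(x_l)\,\del g/\del x_l$ are legitimate; without this care a discontinuous derivation could corrupt the Jacobian interpretation. Equally, the inequality $\rank\le\operatorname{ht}$ together with its equality case must be established in the complete-local setting rather than merely quoted for finitely generated algebras, and this is the precise point at which $\operatorname{char}k=0$ is used. Once these foundations are secured, the translation of conditions~(1) and~(2) into the rank equality closes the proof.
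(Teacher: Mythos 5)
The paper does not actually prove this theorem: it is recalled, with attribution to Nagata, from EGA~IV, Proposition~22.7.2, so there is no internal argument to compare yours against, and the fair comparison is with that cited source. Your translation between the two formulations is correct as far as it goes. Every $k$-derivation of $k[[x_1,\ldots,x_n]]$ sends $\mf{m}^j$ into $\mf{m}^{j-1}$, hence is automatically $\mf{m}$-adically continuous, so $\Der_k(k[[x_1,\ldots,x_n]])$ really is free on the $\del/\del x_l$ (the worry about ``discontinuous derivations'' is moot), and writing $D_i=\sum_l a_{il}\,\del/\del x_l$, Cauchy--Binet shows condition~(2) is achievable for some choice of $D_i$ exactly when some $k\times k$ minor of the Jacobian $(\del g_j/\del x_l)$ lies outside $\mf{p}$. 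Your bookkeeping in both directions (Krull's height theorem for $\operatorname{ht}(I\cdot R_\mf{p})\le k$; the kernel of a surjection of regular local rings being generated by part of a regular system of parameters, with generators clearable into $I$) is also fine.

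The genuine gap is the one you name yourself, and it is not a foundational footnote: the ``main input'' you borrow --- that the rank of $\delta\otimes\kappa(\mf{p})$ is at most $\operatorname{ht}(I\cdot R_\mf{p})$ with equality if and only if $A_\mf{p}$ is regular, for the complete local ring $R$ in characteristic zero --- \emph{is} Nagata's Jacobian criterion in its standard form; it is precisely what EGA~0.22.7.3 and 22.7.2 establish via quasi-coefficient fields and formal smoothness, and it cannot be quoted from the classical criterion for finitely generated algebras, since $R$ is not one. So your argument proves an equivalence between two phrasings of the criterion rather than the criterion itself; the characteristic-zero heart --- that regularity of $A_\mf{p}$ forces the coordinate partials to detect it --- is deferred, exactly as the paper defers the whole statement by citation. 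One improvement worth extracting from your setup: the sufficiency direction needs no deep input at all. Each $D_i$ extends to $R_\mf{p}$ and maps $\mf{p}^2R_\mf{p}$ into $\mf{p}R_\mf{p}$, so from a relation $\sum_j a_jg_j\in\mf{p}^2R_\mf{p}$ one gets $\sum_j \bar{a}_j\,\overline{D_ig_j}=0$ in $\kappa(\mf{p})$ for every $i$ (the terms $(D_ia_j)g_j$ lie in $\mf{p}R_\mf{p}$ since $g_j\in\mf{p}$), and invertibility of $\{D_ig_j\}$ modulo $\mf{p}$ forces all $a_j\in\mf{p}R_\mf{p}$; hence the $g_j$ are linearly independent in $\mf{p}R_\mf{p}/\mf{p}^2R_\mf{p}$, form part of a regular system of parameters, and together with condition~(1) this yields regularity of $A_\mf{p}$ directly. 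Only the necessity direction requires the hard existence-of-derivations input that your proposal leaves unproved.
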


\begin{corollary}\label{C:jaccrit}
Let $f \in k[[x_1, \ldots, x_n]]$ be an element that is not a unit.
Then the local ring 
\[      
    \dfrac{k[[x_1, \ldots, x_n]]}{(f)}
\]
is regular if and only if the ideal 
\begin{equation}\label{Eq:jaccritid}
    \left(f, I_{\nabla f}\right)
\end{equation}
is the unit ideal. 
\end{corollary}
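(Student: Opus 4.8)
The plan is to deduce this directly from Nagata's criterion (Theorem~\ref{T:jaccrit}), applied to the principal ideal $I = (f)$ with $\mf{p}$ taken to be the maximal ideal $\mf{m} = (x_1, \ldots, x_n)$. This is legitimate because $k[[x_1,\ldots,x_n]]/(f)$ is itself local with maximal ideal the image of $\mf{m}$, so it already coincides with the localized ring appearing in the theorem. I will assume $f \neq 0$ (the case $f = 0$ is degenerate and is handled separately, by taking the empty set of generators). Since $(f)$ is generated by the single nonzero element $f$, condition (1) of the criterion is satisfied with $g_1 = f$, and the whole criterion collapses to the following: the ring is regular if and only if there exists a single derivation $D \in \Der_k(k[[x_1,\ldots,x_n]])$ with $Df \notin \mf{m}$, i.e.\ with $Df$ a unit.

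Next I would write a derivation in coordinates as $D = \sum_{i=1}^n a_i \,\del/\del x_i$ with $a_i \in k[[x_1,\ldots,x_n]]$, so that $Df = \sum_{i=1}^n a_i f_{x_i}$. For the direction $(\Leftarrow)$, suppose $(f, I_{\nabla f})$ is the unit ideal; since $f$ is a non-unit we have $f \in \mf{m}$, which forces $I_{\nabla f} \not\subseteq \mf{m}$, so some partial $f_{x_i}$ is a unit. Taking $D = \del/\del x_i$ then gives $Df = f_{x_i} \notin \mf{m}$, and Theorem~\ref{T:jaccrit} yields regularity. For the direction $(\Rightarrow)$ I argue contrapositively: if $(f, I_{\nabla f})$ is not the unit ideal, then $f \in \mf{m}$ and every $f_{x_i} \in \mf{m}$; hence for any $D = \sum a_i \del/\del x_i$ we get $Df = \sum a_i f_{x_i} \in \mf{m}$, so no derivation satisfies condition (2) and the ring is not regular.

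The step I expect to be the main obstacle is the $(\Rightarrow)$ direction, specifically the claim that it suffices to test the coordinate derivations $\del/\del x_i$: I must ensure that \emph{every} derivation occurring in Nagata's criterion sends $f$ into $\mf{m}$ once all the $f_{x_i}$ lie in $\mf{m}$. This reduces to the structure of $\Der_k(k[[x_1,\ldots,x_n]])$, namely that the relevant (continuous) $k$-derivations are exactly the $k[[x_1,\ldots,x_n]]$-linear combinations of the $\del/\del x_i$, so that $Df = \sum_i (Dx_i) f_{x_i}$ holds identically. Once this is secured the equivalence is immediate. As a sanity check, and as an alternative route bypassing derivations entirely, one can instead compute the embedding dimension directly: $R/(f)$ is regular if and only if $\dim_k \mf{m}/(\mf{m}^2 + (f)) = n-1$, i.e.\ if and only if $f \notin \mf{m}^2$ (equivalently $f$ has order one), which is visibly equivalent to some $f_{x_i}$ having nonzero constant term, hence to $I_{\nabla f}$ being the unit ideal.
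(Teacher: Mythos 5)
Your Nagata-based route has one genuine gap, and it is not quite the obstacle you flagged. The part you worried about is fine: by the freeness of $\Der_k(k[[x_1, \ldots, x_n]])$ on $\frac{\del}{\del x_1}, \ldots, \frac{\del}{\del x_n}$ (the paper cites \cite[Theorem~30.6]{hM89} for exactly this; no continuity hypothesis is needed), every derivation satisfies $Df = \sum_i (Dx_i) f_{x_i}$, so all $f_{x_i} \in \mf{m}$ does force $Df \in \mf{m}$ for every $D$. The actual gap is in your ``collapse'': condition (2) of Theorem~\ref{T:jaccrit} is $\det\{D_i g_j\} \notin \mf{p}$ where the $g_j$ are \emph{arbitrary} elements of $(f)$ generating the ideal, not $f$ itself. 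Proving $Df \in \mf{m}$ for all $D$ rules out only the choice $g_1 = f$; your phrase ``no derivation satisfies condition (2)'' silently assumes the generators are $f$. The repair is one line: any $g \in (f)$ is $g = hf$, so $Dg = h\,Df + f\,Dh \in \mf{m}$ since both $Df$ and $f$ lie in $\mf{m}$; hence every entry of $\{D_i g_j\}$, and therefore the determinant, lies in $\mf{m}$. This product-rule step is precisely the pivot of the paper's own argument, run in the opposite direction: from regularity the paper extracts a unit entry $D_l g_m$, expands $D_l$ in the coordinate basis to get some $\frac{\del g_m}{\del x_i}$ a unit, writes $g_m = fh$, and applies the product rule with $f$ a non-unit to conclude some $f_{x_i}$ is a unit. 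One more caveat: your parenthetical about $f = 0$ does not actually save that case --- for $f = 0$ the quotient is $k[[x_1, \ldots, x_n]]$ itself, which \emph{is} regular, while $(f, I_{\nabla f}) = (0)$ is not the unit ideal, so the statement genuinely requires $f \neq 0$ (the paper tacitly assumes this as well; its proof breaks when the empty family of generators suffices).

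Your closing ``sanity check'' deserves more credit than you give it: it is a complete, self-contained proof and a genuinely different route from the paper's, which runs both directions through Theorem~\ref{T:jaccrit}. For $f \neq 0$ a non-unit, Krull's principal ideal theorem gives $\dim k[[x_1, \ldots, x_n]]/(f) = n-1$, while the embedding dimension $\dim_k \mf{m}/(\mf{m}^2 + (f))$ is $n-1$ or $n$ according as $f \notin \mf{m}^2$ or $f \in \mf{m}^2$; and $f \notin \mf{m}^2$ means $f$ has a nonzero linear term, i.e., some $f_{x_i}$ has nonzero constant term, i.e., $I_{\nabla f}$ (equivalently $(f, I_{\nabla f})$, since $f \in \mf{m}$ contributes nothing) is the unit ideal. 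This bypasses Nagata's criterion and the structure theorem for the derivation module entirely, at the cost of invoking the dimension count; the paper's choice makes sense in context, since it sets up Theorem~\ref{T:jaccrit} anyway and the corollary serves as a warm-up specialization of it. If you lead with the embedding-dimension argument and keep the Nagata argument (with the product-rule repair) as the alternative, you have two sound proofs.
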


\begin{proof}
First, suppose that the ideal \eqref{Eq:jaccritid} is the unit ideal.
Since $f$ is not a unit, this means that one of the partial
derivatives of $f$ is a unit.  We may assume that 
\[
    \dfrac{\del f}{\del x_1} \in k[[x_1, \ldots, x_n]]^\times.
\]
But then the derivation $\frac{\del}{\del x_1}$ and the element $f$ satisfy
the conditions of Theorem~\ref{T:jaccrit}, and 
\[
    \dfrac{k[[x_1, \ldots, x_n]]}{(f)}
\]
is regular. 

Now suppose that the ring 
\[
    \dfrac{k[[x_1, \ldots, x_n]]}{(f)}
\]
is regular.  Let $g_1, \ldots, g_k \in k[[x_1, \ldots, x_n]]$ and $D_1, \ldots, D_k
\in \Der_k(k[[x_1, \ldots, x_n]])$ be elements satisfying the conditions of
Theorem~\ref{T:jaccrit}.  Since $\det \{D_ig_j\}$ is a unit, it follows
that $D_lg_m$ must be a unit for some $l$ and $m$.  Recall
(\cite[Theorem~30.6]{hM89}) that $\Der_k(k[[x_1, \ldots, x_n]])$ is a free
$k[[x_1, \ldots, x_n]]$-module of rank two with basis
\[
    \left\{\dfrac{\del}{\del x_1}, \ldots, \dfrac{\del}{\del x_n}\right\}.
\]
Suppose 
\begin{equation}\label{Eq:derivation}
    D_l = \sum_{i =1}^{n} \alpha_i \dfrac{\del}{\del x_i} 
\end{equation}
for power series $\alpha_1, \ldots, \alpha_n \in k[[x_1,\ldots,x_n]]$.
Since $D_lg_m$ is a unit, one of the summands in \eqref{Eq:derivation}
applied to $g_m$ is a unit.  We may assume that 
\[
    \dfrac{\del g_m}{\del x_1} \in k[[x_1, \ldots, x_n]]^\times.
\]
But $f \mid g_m$ so we may write $g_m = fh$ for some $h \in k[[x_1, \ldots, x_n]]$.
Applying the product rule for partial differentiation, we find
\[
    \dfrac{\del g_m}{\del x_1} = f\dfrac{\del h}{\del x_1} + h\dfrac{\del
    f}{\del x_1}.
\]
Now $f$ is not a unit by assumption so it follows that 
\[
    \dfrac{\del f}{\del x_1} \in k[[x_1, \ldots, x_n]]^\times
\]
as required.
\end{proof}

We next establish a Jacobian criterion that can determine when the
reduced quotient ring of a ring is regular.  Recall that if $M$ is an
$R$-module, we say that a prime ideal $\mf{p} \subseteq R$ is an
\emph{associated prime} of $M$ if $\mf{p} = \Ann (R m)$ for some $m \in
M$.  The set of all associated primes of an $R$-module $M$ is called
$\Ass_R(M)$.  Recall that minimal elements of $\Ass_R(0)$ correspond to
irreducible components of $\Spec R$ and non-minimal elements correspond
to \emph{embedded components}.  Note that $\mf{p} \in \Ass_R (M)$ if and
only if there exists an injective $R$-linear map
\begin{equation}\label{Eq:assmap}
\xymatrix{
        {\dfrac{R}{\mf{p}}} \ar[r] & {M}.
}
\end{equation}

\begin{theorem}\label{T:reduced}
Let $f$ be an element of the ring $k[[x_1, \ldots, x_n]]$ that is not a
unit.  Then the local ring 
\[
    \frac{k[[x_1, \ldots, x_n]]}{\sqrt{(f)}}
\]
is regular if and only if all elements of the set
\[      
    \Ass_{k[[x_1, \ldots, x_n]]} \left( \frac{k[[x_1, \ldots, x_n]]}{(f,
    I_{\nabla f})} \right), 
\]
have height one.
\end{theorem}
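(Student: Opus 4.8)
The plan is to reduce the statement to the principal‑ideal criterion of Corollary~\ref{C:jaccrit} applied to the squarefree radical of $f$, and then to track associated primes through a factorization of the Jacobian ideal. Since $R := k[[x_1, \ldots, x_n]]$ is a unique factorization domain, write $f = u\prod_i p_i^{e_i}$ with $u$ a unit and the $p_i$ distinct irreducibles, and set $f_0 := \prod_i p_i$, so that $\sqrt{(f)} = (f_0)$. One first checks that the unit does not affect the Jacobian ideal, i.e. $(f, I_{\nabla f}) = (u^{-1}f, I_{\nabla (u^{-1}f)})$, so we may assume $u = 1$. By Corollary~\ref{C:jaccrit} applied to $f_0$, the ring $R/\sqrt{(f)} = R/(f_0)$ is regular if and only if $(f_0, I_{\nabla f_0}) = R$. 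Thus it suffices to prove that every prime in $\Ass_R\!\left(R/(f, I_{\nabla f})\right)$ has height one if and only if $(f_0, I_{\nabla f_0}) = R$.

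The engine of the argument is a factorization of the Jacobian ideal. Put $m := f/f_0 = \prod_i p_i^{e_i - 1}$. A valuation count shows that every term of $\del f/\del x_j$ is divisible by $p_i^{e_i-1}$, so $m$ divides each partial and
\[
    (f, I_{\nabla f}) = m\cdot J', \qquad J' := \left(f_0,\ \tfrac{1}{m}\tfrac{\del f}{\del x_1},\ \ldots,\ \tfrac{1}{m}\tfrac{\del f}{\del x_n}\right).
\]
A direct calculation, using $\operatorname{char} k = 0$ so that the $e_i$ are units, yields the clean formula
\[
    \frac{1}{m}\frac{\del f}{\del x_j} = \sum_i e_i\,\frac{\del p_i}{\del x_j}\prod_{l \neq i} p_l .
\]
Feeding this into the order argument from the proof of Lemma~\ref{L:isolated} (no irreducible divides all of its own partials) shows that no height‑one prime contains $J'$; hence $\operatorname{ht} J' \geq 2$ and every prime in $\Ass_R(R/J')$ has height at least two.

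I then read off the associated primes from the short exact sequence $0 \to (m)/mJ' \to R/mJ' \to R/(m) \to 0$, in which $(m)/mJ' \cong R/J'$ via multiplication by $m$. Since $(m)$ is principal, $R/(m)$ is Cohen--Macaulay, so $\Ass_R(R/(m))$ consists only of the height‑one primes $(p_i)$ with $e_i \geq 2$. Combining the containment $\Ass_R(R/mJ') \subseteq \Ass_R(R/J') \cup \Ass_R(R/(m))$ with the submodule inclusion $\Ass_R(R/J') = \Ass_R((m)/mJ') \subseteq \Ass_R(R/mJ')$ and separating by height, the height‑$\geq 2$ members of $\Ass_R(R/(f, I_{\nabla f}))$ are exactly $\Ass_R(R/J')$. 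Therefore every associated prime has height one if and only if $\Ass_R(R/J') = \varnothing$, i.e. if and only if $J' = R$. I expect this associated‑prime bookkeeping to be the \textbf{main obstacle}: one must verify both that the height‑$\geq 2$ primes of $\Ass_R(R/J')$ persist in $R/(f, I_{\nabla f})$ and that the height‑one factor $R/(m)$ introduces no spurious embedded primes, and it is precisely the Cohen--Macaulayness of $R/(m)$ together with the bound $\operatorname{ht} J' \geq 2$ that keeps the two strata from interfering.

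Finally I compare $J' = R$ with $(f_0, I_{\nabla f_0}) = R$. As $f_0 \in \mf{m}$, each ideal is the unit ideal exactly when one of its non‑$f_0$ generators has nonzero constant term. Comparing the constant term of $\tfrac{1}{m}\del f/\del x_j$ with that of $\del f_0/\del x_j$, the formula above shows they differ only by the nonzero scalars $e_i$; moreover, if $f_0$ has at least two distinct factors then every product $\prod_{l\neq i}p_l$ lies in $\mf{m}$, forcing both ideals into $\mf{m}$ simultaneously, while if $f_0$ is irreducible the two partials agree up to the unit $e_1$. Hence $J' = R \iff (f_0, I_{\nabla f_0}) = R$, which closes the chain of equivalences and proves the theorem.
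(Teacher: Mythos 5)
Your proof is correct, and although its computational engine coincides with the paper's, the logical organization is genuinely different. Writing $R = k[[x_1, \ldots, x_n]]$, both you and the paper factor the Jacobian ideal as $(f, I_{\nabla f}) = m \cdot J'$ with $m = \prod_i p_i^{e_i - 1}$, and both rely on the same two inputs: the order argument of Lemma~\ref{L:isolated} (no irreducible divides all of its own partials), which forces $J'$ to be the unit ideal or to have height at least two, and Corollary~\ref{C:jaccrit}. But the paper proves the two implications separately: for the forward direction it first deduces $f = g^k$ from the fact that regular local rings are domains, then computes $(f, I_{\nabla f}) = (g^{k-1})$ and reads off the associated primes by hand; for the converse it splits into the cases $k = 1$ and $k \geq 2$ and uses only the injection $R/\mf{p} \hookrightarrow R/J' \xrightarrow{\,\cdot m\,} R/mJ'$ to show that one height-$\geq 2$ associated prime persists. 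Your short exact sequence $0 \to (m)/mJ' \to R/mJ' \to R/(m) \to 0$ has the paper's multiplication map as its left-hand arrow, but by adding the standard containment $\Ass(M) \subseteq \Ass(M') \cup \Ass(M'')$ and the unmixedness of the principal ideal $(m)$, you obtain an exact identification --- the height-$\geq 2$ members of $\Ass_R(R/(f, I_{\nabla f}))$ are precisely $\Ass_R(R/J')$ --- which drives a single biconditional chain (regular $\iff (f_0, I_{\nabla f_0}) = R \iff J' = R \iff$ all associated primes have height one) and eliminates both the paper's case analysis and its domain-theoretic detour in the forward direction. The paper's route buys economy of prerequisites (a single injectivity check, no Cohen--Macaulay or unmixedness input); yours buys uniformity and a sharper structural statement about exactly where the embedded primes live. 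One caveat shared by both arguments: each implicitly assumes $f \neq 0$, as it must, since for $f = 0$ the statement fails ($(0) \in \Ass_R(R)$ has height zero while $R/\sqrt{(0)} = R$ is regular), so ``not a unit'' should be read throughout as ``nonzero non-unit.''
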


\begin{proof}
First suppose that $\frac{k[[x_1, \ldots, x_n]]}{\sqrt{(f)}}$ is
regular.  It follows that $f = g^k$ for some irreducible element $g \in
k[[x_1, \ldots, x_n]]$.  For if $f$ were divisible by two distinct
irreducible factors $g$ and $h$ not unit multiples of each other, then
the quotient ring $\frac{k[[x_1, \ldots, x_n]]}{\sqrt{(f)}}$ would have
the cosets of $g$ and $h$ as zero-divisors.  But this would contradict
the fact that regular local rings are integral domains
\cite[Theorem~14.3]{hM89}.  Since $\sqrt{(f)} = (g)$, we find that the
ideal $(g, I_{\nabla g})$ is the unit ideal by Corollary~\ref{C:jaccrit}.
Hence,
\[
\begin{aligned}
    \dfrac{k[[x_1, \ldots, x_n]]}{(f, I_{\nabla f})} &= \dfrac{k[[x_1,
    \ldots, x_n]]}{(g^k,kg^{k-1}g_{x_1}, \ldots, kg^{k-1}g_{x_n})} \\
        &= \dfrac{k[[x_1, \ldots, x_n]]}{(g^{k-1}(g, I_{\nabla g}))} \\
        &= \dfrac{k[[x_1, \ldots, x_n]]}{(g^{k-1})},
\end{aligned}
\]
and
\[
    \Ass_{k[[x_1, \ldots, x_n]]}\left( \dfrac{k[[x_1, \ldots,
    x_n]]}{(g^{k-1})} \right) = 
    \begin{cases}
    \emptyset &\text{if } k = 1 \\
    (g) &\text{if } k > 1
    \end{cases}
\]
as required.

Now, suppose that the ring 
\[
    \dfrac{k[[x_1, \ldots, x_n]]}{\sqrt{(f)}}
\]
is not regular.  We may assume that 
\[
    f = \prod_{i = 1}^k g_i^{e_i} 
\]
for some positive integers $e_1, \ldots, e_k$ and where the elements
$g_1, \ldots, g_k$ are pairwise relatively prime irreducible factors of
$f$.  Then the radical of the ideal $(f)$ is given by 
\[      
    \sqrt{(f)} = \left( \prod_{i = 1}^k g_i \right).
\]
We then may compute the Jacobian ideal of $f$:
\begin{equation*}
\begin{aligned}
    (f, I_{\nabla f}) &= \left( \prod_{i=1}^k g_i^{e_i}, 
                    \prod_{i=1}^k g_i^{e_i}\left(\sum_{i=1}^k
                    \dfrac{e^i{g_i}_{x_1}}{g_i}\right), \ldots, 
                    \prod_{i=1}^k g_i^{e_i}\left(\sum_{i=1}^k
                    \dfrac{e^i{g_i}_{x_n}}{g_i}\right) \right) \\
                &= \left(\prod_{i=1}^k g_i^{e_i-1}\right)
                    \left( \prod_{i=1}^k g_i, 
                    \prod_{i=1}^k g_i\left(\sum_{i=1}^k
                    \dfrac{e^i{g_i}_{x_1}}{g_i}\right), \ldots, 
                    \prod_{i=1}^k g_i\left(\sum_{i=1}^k
                    \dfrac{e^i{g_i}_{x_n}}{g_i}\right) \right). 
\end{aligned}
\end{equation*}
Since ${g_i}$ cannot divide all of its partial derivatives by
Lemma~\ref{L:isolated}, it follows that the ideal
\begin{equation}\label{Eq:jacred}      
     \left( \prod_{i=1}^k g_i, 
                    \prod_{i=1}^k g_i\left(\sum_{i=1}^k
                    \dfrac{e^i{g_i}_{x_1}}{g_i}\right), \ldots
                    \prod_{i=1}^k g_i\left(\sum_{i=1}^k
                    \dfrac{e^i{g_i}_{x_n}}{g_i}\right) \right)
\end{equation}
is either the unit ideal or has height greater than two.  An ideal
\[
    (\phi_1, \ldots, \phi_s) \subseteq k[[x_1, \ldots, x_n]]
\]
is the unit ideal if and only if $\phi_i$ is a unit for some $1 \leq i
\leq s$, and a power series $\psi \in k[[x_1, \ldots, x_n]]$ is a unit
if and only if its constant term is not zero.  For $1 \leq j \leq n$,
the constant term of 
\[
                    \prod_{i=1}^k g_i\left(\sum_{i=1}^k
                    \dfrac{e^i{g_i}_{x_j}}{g_i}\right)
\]
is the sum of the constant terms of the elements
\begin{equation}\label{Eq:derivpieces}
    e^ig_i \cdots g_{i-1} {g_i}_{x_j} g_{i+1} \cdots g_k
\end{equation}
for $1 \leq i \leq k$.  If $k \geq 2$, the constant term in
$\eqref{Eq:derivpieces}$ is zero and the
ideal in \eqref{Eq:jacred} is not the unit ideal. Hence, the ideal
\eqref{Eq:jacred} has height two if $k \geq 2$, and the theorem follows.
If $k = 1$, we put $g_1 = g$ and $e_1 = e$ and compute
\[
\begin{aligned}
    (f, I_{\nabla f}) &= (g^e, eg^{e-1}g_{x_1},\ldots,  eg^{e-1}g_{x_n}) \\
                &= (g^{e-1})(g, I_{\nabla g}),
\end{aligned}
\]
and $(g, I_{\nabla g})$ is not the unit ideal by Corollary~\ref{C:jaccrit}
because we are assuming that the ring
\[
    \dfrac{k[[x_1, \ldots, x_n]]}{\sqrt{(f)}} = \dfrac{k[[x_1, \ldots,
    x_n]]}{(g)} 
\]
is not regular.  Since $g$ is irreducible, Lemma~\ref{L:isolated}
implies that 
\[
    \dim \dfrac{k[[x_1, \ldots, x_n]]}{(g, I_{\nabla g})} < n-1.
\] 
Hence, there exists a prime ideal
\[
    \mf{p} \in \Ass_{k[[x_1, \ldots, x_n]]} 
    \left( \dfrac{k[[x_1, \ldots, x_n]]}{(g, I_{\nabla g})} \right)
\]
such that $\mf{p}$ has height two or more. 

We are now in the situation where 
\[
    \dfrac{k[[x_1, \ldots, x_n]]}{(f, I_{\nabla f})} = \dfrac{k[[x_1,
    \ldots, x_n]]}{(\psi)\cdot I} 
\]
for some element $\psi \in k[[x_1, \ldots, x_n]]$ and some ideal $I$
such that  
\[
    \mf{p} \in \Ass_{k[[x_1, \ldots, x_n]]} \left( \dfrac{k[[x_1,
    \ldots, x_n]]}{I} \right)   
\]
As in \eqref{Eq:assmap}, there is an injective $k[[x_1, \ldots,
x_n]]$-linear map 
\[
\xymatrix{
    {\dfrac{k[[x_1, \ldots, x_n]]}{\mf{p}}} \ar[r] & {\dfrac{k[[x_1,
    \ldots, x_n]]}{I}}.  
}
\]
Composing with the injective $k[[x_1, \ldots, x_n]]$-linear map
\[
\xymatrix{
    {\dfrac{k[[x_1, \ldots, x_n]]}{I}} \ar[r] & {\dfrac{k[[x_1, \ldots,
    x_n]]}{(\psi)\cdot I}} }
\]
induced by multiplication by the element $\psi$ shows that 
\[
    \mf{p} \in \Ass_{k[[x_1, \ldots, x_n]]}\left(
    \dfrac{k[[x_1, \ldots, x_n]]}{(\psi)\cdot I} \right)
\] 
as required.
\end{proof}

\section{Application to gradient ideals and gradient
schemes}\label{S:gradapp}

\begin{definition}\label{D:gradideal}
Let $I$ be an ideal of the power series ring $k[[x_1, \ldots, x_n]]$.
We say that $I$ is a \emph{gradient ideal} if there exists an element $f
\in k[[x_1, \ldots, x_n]]$ such that $I = I_{\nabla f}$. 
\end{definition}

\begin{lemma}\label{L:gradinvar}
The property of an ideal $I \subseteq k[[x_1, \ldots, x_n]]$ being a
gradient ideal is invariant under isomorphism, the gradient ideal of an
element being sent to the gradient ideal of the image of the element
under the isomorphism.
\end{lemma}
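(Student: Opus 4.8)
The plan is to reduce the lemma to a single computation with the chain rule. An isomorphism here is a $k$-algebra automorphism $\phi$ of $k[[x_1,\ldots,x_n]]$, and it suffices to prove that $\phi(I_{\nabla f}) = I_{\nabla \phi(f)}$ for every $f$; the lemma is then immediate, since the right-hand side is visibly a gradient ideal. Because an automorphism permutes maximal ideals, $\phi$ fixes the unique maximal ideal $\mf{m} = (x_1, \ldots, x_n)$, so each $\phi_i := \phi(x_i)$ lies in $\mf{m}$, the assignment $f \mapsto \phi(f)$ is the continuous substitution $f \mapsto f(\phi_1, \ldots, \phi_n)$, and $\phi$ is surjective, so that $\phi(I_{\nabla f}) = (\phi(f_{x_1}), \ldots, \phi(f_{x_n}))$.

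Next I would apply the chain rule for formal power series to $g := \phi(f) = f(\phi_1, \ldots, \phi_n)$, obtaining for each $1 \le j \le n$
\[
    \frac{\del g}{\del x_j} = \sum_{i=1}^{n} \phi\!\left(\frac{\del f}{\del x_i}\right) \frac{\del \phi_i}{\del x_j}.
\]
Writing this in matrix form, the row vector $(g_{x_1}, \ldots, g_{x_n})$ equals the row vector $(\phi(f_{x_1}), \ldots, \phi(f_{x_n}))$ times the Jacobian matrix $J := (\del \phi_i / \del x_j)$. Thus each generator $g_{x_j}$ of $I_{\nabla g}$ lies in the ideal $(\phi(f_{x_1}), \ldots, \phi(f_{x_n})) = \phi(I_{\nabla f})$, giving one inclusion at once.

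To obtain the reverse containment I would show that $J$ is invertible over $k[[x_1,\ldots,x_n]]$, whence the two generating sets are interchangeable and $I_{\nabla g} = \phi(I_{\nabla f})$. Since $\phi$ induces an isomorphism of $\mf{m}/\mf{m}^2$, the constant-term matrix $(\del\phi_i/\del x_j)(0)$, which is exactly the matrix of this induced map in the basis given by the images of the $x_i$, is invertible over $k$; hence $\det J$ has nonzero constant term and so is a unit, by the characterization of units in the power series ring used in the proof of Theorem~\ref{T:reduced}. The main obstacle is really the justification of the two formal facts underlying the computation: that the chain rule is valid for formal (not convergent) power series composition, which must be checked degree by degree, and that an automorphism necessarily sends $\mf{m}$ into $\mf{m}$ with invertible linear part. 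Once these are in hand, the invertible-matrix argument closes the proof.
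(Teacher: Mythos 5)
Your proposal is correct and follows essentially the same route as the paper's proof: apply the formal chain rule to write the partials of $\phi(f)$ as the $\phi$-images of the partials of $f$ times the Jacobian matrix, then use invertibility of that matrix to conclude $\phi(I_{\nabla f}) = I_{\nabla \phi(f)}$. The only difference is that you spell out details the paper leaves implicit (that an automorphism preserves $\mf{m}$ and acts by substitution, and that invertibility of the Jacobian follows from the induced isomorphism on $\mf{m}/\mf{m}^2$ making $\det J$ a unit), which strengthens rather than changes the argument.
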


\begin{proof}
Let $\theta$ be an isomorphism
\begin{equation}        
\xymatrix{
    {k[[x_1, \ldots, x_n]]} \ar[r]^-*{\theta} & {k[[u_1, \ldots, u_n]],}
}
\end{equation}
and put $\theta(x_i) = x_i(u_1, \ldots, u_n)$ for each $1 \leq i \leq
n$. For $1 \leq i \leq n$, we apply the chain rule to find
\begin{equation}\label{Eq:chainrule}
    \dfrac{\del \theta(f)}{\del u_i} = 
    \sum_{j = 1}^n \theta \left( \dfrac{\del f}{\del x_j} \right)
    \dfrac{\del x_j}{\del u_i}.
\end{equation}
Since $\theta$ is an isomorphism, the Jacobian matrix of $\theta$
is invertible.  It follows that the image of the gradient ideal of $f$
under $\theta$ is equal to the gradient ideal of $\theta(f)$ as
required.
\end{proof}

\begin{definition}\label{D:gradscheme} 
Let $X$ be a scheme of finite
type over $k$, and let $P \in X$ be a $k$-rational point.  We say that
the pointed scheme $(X, P)$ is a \emph{gradient scheme} if the
completion of the local ring at $P$ with respect to its maximal ideal is
isomorphic as a complete local $k$-algebra to 
\[ 
    \dfrac{k[[x_1, \ldots, x_n]]}{I} 
\] 
for some gradient ideal $I \subseteq k[[x_1, \ldots, x_n]]$.  If $I$ is
a principal ideal, we say that the gradient scheme is \emph{principal}.
\end{definition}

We state a simple lemma regarding ideals in power series rings.

\begin{lemma}\label{L:princgradideal}
If an ideal $(f_1, \ldots, f_k) \subseteq k[[x_1, \ldots, x_n]]$ is
principal, then 
\[
    (f_1, \ldots, f_k) = (f_i)
\]
for some $1 \leq i \leq n$.
\end{lemma}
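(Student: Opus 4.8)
The plan is to exploit the fact that $R := k[[x_1, \ldots, x_n]]$ is a Noetherian local ring with maximal ideal $\mf{m} = (x_1, \ldots, x_n)$, and to read off the conclusion from the dimension of the $k$-vector space $I/\mf{m}I$, where $I := (f_1, \ldots, f_k)$. First I would dispose of the trivial case: if $I = 0$ then every $f_i$ vanishes and any one of them generates $I$. So from now on assume $I \neq 0$.

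Since $I$ is principal it admits a generating set of size one, so the minimal number of generators $\mu(I) = \dim_k I/\mf{m}I$ satisfies $\mu(I) \leq 1$; because $I \neq 0$, Nakayama's lemma rules out $\dim_k I/\mf{m}I = 0$ (that would force $I = \mf{m}I$ and hence $I = 0$), so in fact $\dim_k I/\mf{m}I = 1$. Now the images $\overline{f_1}, \ldots, \overline{f_k}$ of the generators span this one-dimensional space, so at least one of them, say $\overline{f_i}$, is nonzero and therefore by itself spans $I/\mf{m}I$. Translating this back to $R$-submodules gives
\[
    I = (f_i) + \mf{m}I.
\]

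The key step is then a direct application of Nakayama's lemma to the finitely generated $R$-module $M := I/(f_i)$: the displayed equality yields $M = \mf{m}M$, and since $R$ is local and $M$ is finitely generated, we conclude $M = 0$, that is, $(f_i) = I$. This is precisely the desired assertion (with the index $i$ ranging over the generators).

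I do not anticipate a genuine obstacle here; the only points that require care are the bookkeeping at the two edge cases (handling $I = 0$, and verifying that $\dim_k I/\mf{m}I$ is exactly $1$ rather than $0$ once $I \neq 0$) and setting up the auxiliary module $M = I/(f_i)$ correctly so that Nakayama's lemma applies. One could alternatively argue more concretely by comparing orders of power series, but the Nakayama argument is shorter and sidesteps any case analysis on leading terms.
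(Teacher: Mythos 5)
Your proof is correct. There is in fact nothing in the paper to compare it against: the lemma is introduced with ``We state a simple lemma regarding ideals in power series rings'' and no proof is given, so your write-up supplies a missing argument rather than duplicating one. Your Nakayama route is clean and, notably, uses only that $k[[x_1, \ldots, x_n]]$ is local and that $I$ is finitely generated --- it never uses Noetherianity beyond that, nor the domain property, so it proves the statement in any local ring. The presumably intended one-line alternative does use the domain property: write $I = (g)$, so $f_i = a_i g$ for each $i$ and $g = \sum_i b_i f_i = \bigl(\sum_i b_i a_i\bigr) g$; if $g \neq 0$, then since the power series ring is an integral domain $\sum_i b_i a_i = 1$, and since it is local some summand $b_i a_i$ --- hence some $a_i$ --- must be a unit, giving $(f_i) = (g) = I$ (the case $g = 0$ is trivial, as in your argument). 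The two proofs are comparable in length; yours trades the domain hypothesis for two applications of Nakayama and is therefore marginally more general. One last point you handled implicitly and correctly: the lemma as printed contains a typo (``for some $1 \leq i \leq n$'' should be $1 \leq i \leq k$), and your closing remark that the index $i$ ranges over the generators reads the statement the right way.
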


For any ring $R$, let $\nil R$ denote the nilradical of $R$.

\begin{lemma}\label{L:nilcomplet}
Let $(A,\mf{m})$ be a local $k$-algebra that is the localization of a
finitely generated $k$-algebra, and let $\widehat{A}$ denote the
$\mf{m}$-adic completion of $A$.  Then 
\[
    \left( \dfrac{A}{\nil A} \right)^{\widehat{\ }} \cong
    \dfrac{\widehat{A}}{\nil
    \widehat{A}}.
\]
\end{lemma}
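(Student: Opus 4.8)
The plan is to reduce the statement to the single nontrivial fact that completion preserves reducedness for rings of this type, and to deduce that fact from the excellence of localizations of finitely generated $k$-algebras. First I would record that $A$ is Noetherian, being a localization of a finitely generated $k$-algebra, so that the $\mf{m}$-adic completion $\widehat{A}$ is a faithfully flat $A$-algebra (\cite[Theorem~8.8]{hM89}) and the completion functor $(-) \otimes_A \widehat{A}$ is exact on finitely generated $A$-modules. Applying it to the short exact sequence
\[
    0 \longrightarrow \nil A \longrightarrow A \longrightarrow A/\nil A \longrightarrow 0
\]
yields an isomorphism $\left( A/\nil A \right)^{\widehat{\ }} \cong \widehat{A}/(\nil A)\widehat{A}$, where $(\nil A)\widehat{A}$ denotes the ideal generated by the image of $\nil A$, which by flatness is also the completion of the $A$-module $\nil A$ sitting inside $\widehat{A}$. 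Thus it suffices to prove the equality of ideals $(\nil A)\widehat{A} = \nil\widehat{A}$.

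One inclusion is immediate. Since $A$ is Noetherian, the ideal $\nil A$ is nilpotent, say $(\nil A)^N = 0$. Then $\left( (\nil A)\widehat{A} \right)^N = (\nil A)^N\,\widehat{A} = 0$, so every element of $(\nil A)\widehat{A}$ is nilpotent and hence $(\nil A)\widehat{A} \subseteq \nil\widehat{A}$.

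For the reverse inclusion, set $B = A/\nil A$, a reduced local ring, and note that the $\mf{m}$-adic completion of the module $A/\nil A$ is the completion $\widehat{B}$ of the ring $B$. The containment $\nil\widehat{A} \subseteq (\nil A)\widehat{A}$ is then exactly the assertion that $\widehat{A}/(\nil A)\widehat{A} \cong \widehat{B}$ is reduced, i.e.\ that the completion of the reduced ring $B$ is again reduced. Here I would invoke excellence: a field is excellent, finitely generated algebras over an excellent ring are excellent, and localizations of excellent rings are excellent, so $A$ — and hence its quotient $B$ — is excellent (\cite[Theorem~32.2]{hM89}). In particular $B$ is a $G$-ring, so the completion map $B \to \widehat{B}$ is a regular homomorphism, i.e.\ flat with geometrically regular, and in particular geometrically reduced, fibers. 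A standard base-change result then gives that a flat local homomorphism with reduced fibers carries reduced rings to reduced rings; applied to the reduced ring $B$ this shows $\widehat{B}$ is reduced, which yields $\nil\widehat{A} \subseteq (\nil A)\widehat{A}$ and completes the argument.

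The main obstacle is precisely this last step. The commutation of completion with reduction is not a formal consequence of flatness and genuinely fails for arbitrary Noetherian local rings; it is exactly the excellence (equivalently, $G$-ring) hypothesis, available here because $A$ is a localization of a finitely generated $k$-algebra, that makes the formal fibers geometrically reduced and forces $\widehat{B}$ to be reduced. The remaining steps are routine manipulations with faithfully flat base change and with the nilpotence of $\nil A$ in a Noetherian ring.
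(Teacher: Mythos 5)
Your proof is correct, and its first half --- using flatness of completion to identify $\left( A/\nil A \right)^{\widehat{\ }}$ with $\widehat{A}/(\nil A)\widehat{A}$, thereby reducing everything to the equality of ideals $(\nil A)\widehat{A} = \nil \widehat{A}$ --- is exactly the paper's decomposition. Where you genuinely diverge is in the key input certifying that the completion of the reduced ring $B = A/\nil A$ is reduced. The paper cites the classical Zariski--Samuel theorem on analytic unramifiedness \cite[Ch.~VIII, \S~13, Theorem~32]{oZ60}, applied to $A/\nil A$ described there as ``an integral domain''; in general $A/\nil A$ is only reduced, with possibly several minimal primes, so the paper's appeal to a statement about local domains strictly speaking requires the extra step of embedding $B$ into the product of the domains $A/\mf{p}$ over the minimal primes $\mf{p}$ and using exactness of completion. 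Your route through excellence sidesteps this wrinkle entirely: localizations of finitely generated $k$-algebras are excellent, hence $G$-rings, so $B \to \widehat{B}$ is flat with geometrically regular --- in particular reduced --- fibers, and the standard ascent of reducedness along a flat local homomorphism with reduced fibers then gives that $\widehat{B}$ is reduced with no domain hypothesis at all. What the paper's approach buys is a single elementary classical citation; what yours buys is robustness (reduced suffices, no reduction to the domain case) and generality (the argument works verbatim for any excellent local ring, not just those essentially of finite type over $k$), and you also make explicit the easy inclusion $(\nil A)\widehat{A} \subseteq \nil \widehat{A}$ via nilpotence of $\nil A$ in the Noetherian ring $A$, which the paper leaves implicit in asserting the equality. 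One pedantic caution: verify the numbering of your citation to \cite{hM89} --- the material on $G$-rings and formal fibres is in \S~32 there, but the full statement that finitely generated algebras over a field are excellent is due to Grothendieck (EGA~IV, 7.8.3) and may not appear as a single theorem in that book.
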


\begin{proof}
Since completion is flat (\cite[Theorem~8.8]{hM89}) and $\widehat{I} =
I\cdot \widehat{A}$ for any ideal $I \subseteq A$
(\cite[Theorem~8.11]{hM89}), we know that 
\[
     \left( \dfrac{A}{\nil A} \right)^{\widehat{\ }} \cong
     \dfrac{\widehat{A}}{(\nil A)\cdot \widehat{A}}.
\]
Since 
\[
    \left( \dfrac{A}{\nil A} \right)
\] 
is the localization of a finitely generated $k$-algebra and an integral
domain, its completion has no nilpotent elements (\cite[Ch.~VIII,
\S~13, Theorem~32]{oZ60}). Hence, $(\nil A) \cdot \widehat{A} = \nil
\widehat{A}$ as required.
\end{proof}

\begin{theorem}\label{T:princgrad}
If a gradient scheme $(X, P)$ is principal, then its reduced
subscheme is regular at $P$.  
\end{theorem}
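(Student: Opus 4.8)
The plan is to convert the statement into a question about a single power series and then feed it to Theorem~\ref{T:reduced}. By Definition~\ref{D:gradscheme} the completion of $\Oh_{X,P}$ is isomorphic to $k[[x_1,\ldots,x_n]]/I$ for a principal gradient ideal $I=I_{\nabla f}$. Regularity of a Noetherian local ring is detected by its completion, and the reduced subscheme has local ring $\Oh_{X,P}/\nil$, whose completion is $k[[x_1,\ldots,x_n]]/\sqrt{I}$ by Lemma~\ref{L:nilcomplet} (applicable since $\Oh_{X,P}$ is the localization of a finitely generated $k$-algebra). So it suffices to show $k[[x_1,\ldots,x_n]]/\sqrt{I}$ is regular. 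Since $I_{\nabla f}$ is principal, Lemma~\ref{L:princgradideal} gives $I_{\nabla f}=(f_{x_i})$ for some $i$; after relabeling the coordinates I set $h:=f_{x_1}$, so the goal becomes the regularity of $k[[x_1,\ldots,x_n]]/\sqrt{(h)}$.

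I would then apply Theorem~\ref{T:reduced} to the element $h$, reducing the task to showing that every prime in $\Ass(k[[x_1,\ldots,x_n]]/(h,I_{\nabla h}))$ has height one. The first simplification uses the hypothesis directly: $I_{\nabla f}=(h)$ means $h\mid f_{x_j}$, so write $f_{x_j}=a_j h$ with $a_1=1$. Differentiating and using $f_{x_1x_j}=f_{x_jx_1}$ gives $h_{x_j}=a_{j,x_1}h+a_j h_{x_1}\equiv a_j h_{x_1}\pmod{h}$, whence $(h,I_{\nabla h})=(h,h_{x_1})$. The problem is now to control the associated primes of the two-generator ideal $(h,h_{x_1})$.

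The heart of the argument, and the step I expect to be the main obstacle, is to show that $f$ has a single irreducible factor; this is where principality is genuinely used. Factor $f=w\prod_i g_i^{s_i}$ with $w$ a unit and the $g_i$ distinct irreducibles. By Lemma~\ref{L:divpartials} the largest power of $g_i$ dividing every partial of $f$ is exactly $g_i^{s_i-1}$, and by Lemma~\ref{L:radgrad} any common factor of the partials already divides $f$; combined with principality, which forces $h$ to equal, up to a unit, the greatest common divisor of the $f_{x_j}$, this yields $h=\bigl(\prod_i g_i^{s_i-1}\bigr)C_1$ with $C_1=w\sum_i s_i g_{i,x_1}\prod_{l\neq i}g_l+w_{x_1}\prod_i g_i$ a unit. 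Evaluating constant terms is then decisive: if there were two distinct factors, every summand of $C_1$ would contain a product $\prod_{l\neq i}g_l$ or $\prod_i g_i$ vanishing at the origin, forcing $C_1(0)=0$ and contradicting that $C_1$ is a unit. Hence $f=wg^s$ for a single irreducible $g$, and the same evaluation applied to $C_1=wsg_{x_1}+w_{x_1}g$ forces $g_{x_1}(0)\neq0$, so $g_{x_1}$ is a unit.

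Finally I would assemble the conclusion. The case $s=1$ makes $h$ a unit and $I$ the unit ideal, which cannot happen at a point $P\in X$, so $s\geq2$. Then $h=g^{s-1}\cdot(\text{unit})$ and, because $g_{x_1}$ is a unit, $h_{x_1}=g^{s-2}\cdot(\text{unit})$, so $(h,h_{x_1})=(g^{s-2})$; its associated primes are $\{(g)\}$ (or none when $s=2$), all of height one, and Theorem~\ref{T:reduced} delivers the regularity of $k[[x_1,\ldots,x_n]]/\sqrt{(h)}$. Equivalently and more directly, $\sqrt{(h)}=(g)$ with $g_{x_1}$ a unit, so $k[[x_1,\ldots,x_n]]/(g)$ is regular by Corollary~\ref{C:jaccrit}; either way, the factor-counting of the previous paragraph is exactly what makes the theorem work, and I expect verifying that $C_1$ is a unit to be the one place demanding real care.
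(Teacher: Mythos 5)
Your proof is correct, but it takes a genuinely different route from the paper's. The paper first proves $f \in (f_{x_1})$: since $f^k \in (f_{x_1})$ by Lemma~\ref{L:radgrad}, every irreducible factor of $f_{x_1}$ divides $f$, and principality makes $f_{x_1}$ divide every partial of $f$, so Lemma~\ref{L:divpartials} lifts each exact prime power in $f_{x_1}$ into $f$. Hence $(f, I_{\nabla f}) = (f_{x_1})$ is principal, all of its associated primes have height one, and Theorem~\ref{T:reduced} is applied to $f$ itself (not to $h = f_{x_1}$) to conclude that $k[[x_1,\ldots,x_n]]/\sqrt{(f)}$ is regular; the paper then transfers this to $\sqrt{(f_{x_1})}$ by writing $f = g^k$ with $g$ of nonzero linear term and changing coordinates via Lemma~\ref{L:gradinvar} so that $f$ becomes $u_1^k$ and the gradient ideal becomes $(u_1^{k-1})$. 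You instead bypass the statement $f \in (f_{x_1})$ entirely and work directly with the factorization $f = w\prod_i g_i^{s_i}$: your identification of $h$ up to unit with $\prod_i g_i^{s_i-1}$ (the cofactor $C_1$ being a unit follows, as you say, from the same two lemmas plus principality, since $g_i \nmid C_1$ and every irreducible factor of $h$ is some $g_i$), and the constant-term evaluation of $C_1$ does double duty, killing the multi-factor case and producing $g_{x_1}(0) \neq 0$ in one stroke. What your route buys is that the direct ending via Corollary~\ref{C:jaccrit} (namely $\sqrt{(h)} = (g)$ with $g_{x_1}$ a unit) needs neither Theorem~\ref{T:reduced} nor the coordinate change of Lemma~\ref{L:gradinvar} --- this is essentially the elementary proof attributed to Yu-Han Liu in the acknowledgments --- while your alternative ending through Theorem~\ref{T:reduced} applied to $h$, using the pleasant mixed-partials identity $(h, I_{\nabla h}) = (h, h_{x_1}) = (g^{s-2})$, parallels the paper but applies the criterion to $h$ rather than to $f$; what the paper's route buys is that it never needs your explicit cofactor computation, since once $f \in (f_{x_1})$ is known, Theorem~\ref{T:reduced} does all the structural work. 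Two small points you should make explicit, both of which the paper also treats only implicitly: normalize $f$ by subtracting its constant term so that $f$ is not a unit before invoking Lemma~\ref{L:radgrad}, and dispose of the trivial case $I_{\nabla f} = (0)$, where the completed local ring is $k[[x_1,\ldots,x_n]]$ itself and there is nothing to prove.
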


\begin{proof}
Let $\Oh_P$ denote the local ring at $P$.  We must show that the local
ring 
\[
    \dfrac{\Oh_P}{\nil \Oh_P}
\]
is regular.  A local ring is regular if and only if its completion is
regular (\cite[Ch.~VIII, \S~11]{oZ60}), so by Lemma~\ref{L:nilcomplet}
it suffices to show that the ring
\[
    \dfrac{\widehat{\Oh_P}}{\nil \widehat{\Oh_P}}
\]
is regular.  We are assuming that the pointed scheme $(X,P)$ is a
principal gradient scheme so we may assume, by
Lemma~\ref{L:princgradideal}, that 
\[
    \widehat{\Oh_P} \cong \dfrac{k[[x_1, \ldots, x_n]]}{(f_{x_1})}
\]
for some element $f \in k[[x_1, \ldots, x_n]]$ that is not a unit and
such that $f_{x_1} \neq 0$.  To establish the theorem we must show that
the ring
\[
    \dfrac{k[[x_1, \ldots, x_n]]}{\sqrt{(f_{x_1})}}
\]
is regular.

We proceed by analyzing the form of the element $f$.  First note that if
$f = g^k$ for some element $g \in k[[x_1, \ldots, x_n]]$ having non-zero
linear term, then the result follows from Lemma~\ref{L:gradinvar}.
Indeed, in this case there is a formal change of coordinates $(x_1,
\ldots, x_n) \mapsto (u_1, \ldots, u_n)$ under which $g^k$ is sent to
${u_1}^k$.  

We next consider the case $f \in I_{\nabla f}$.  In this case, we have
$(f, I_{\nabla f}) = (f_{x_1})$ and hence all associated primes of the ring
\[
    \dfrac{k[[x_1, \ldots, x_n]]}{(f, I_{\nabla f})} = \dfrac{k[[x_1,
    \ldots, x_n]]}{(f_{x_1})}
\]
have height one.  It follows from Theorem~\ref{T:reduced} that the ring 
\[
    \dfrac{k[[x_1, \ldots, x_n]]}{\sqrt{(f)}}
\]
is regular.  Therefore $f = g^k$ for some power series $g$ with nonzero
linear term, and we are in the case of the previous paragraph.

To complete the proof it suffices to show that we must have $f \in
(f_{x_1})$.  By Lemma~\ref{L:radgrad}, we know that $f^k \in (f_{x_1})$
for some integer $k \geq 1$.  This implies that if an irreducible power
series $h$ divides $f_{x_1}$ it must also divide $f$.  The result now
follows from Lemma~\ref{L:divpartials}. 
\end{proof}

\bibliographystyle{amsplain}
\bibliography{bib}
\end{document}